\begin{document}

\def\R{\mathbb{R}}
\def\N{\mathbb{N}}
\def\H{\mathcal{H}}
\def\d{\textrm{div}}
\def\v{\textbf{v}}
\def\I{\hat{I}}
\def\B{\hat{B}}
\def\x{\hat{x}}
\def\y{\hat{y}}
\def\p{\hat{\phi}}
\def\P{\mathcal{P}}
\def\r{\hat{r}}
\def\w{\textbf{w}}
\def\u{\textbf{u}}
\def\K{\mathcal{K}}
\def\Reg{\textrm{Reg}}
\def\s{\textrm{Sing}}
\def\sgn{\textrm{sgn}}

\numberwithin{equation}{section}

\newtheorem{lem}{Lemma}[section]
\newtheorem{rem}{Remark}[section]
\newtheorem{cor}{Corollary}[section]
\newtheorem{thm}{Theorem}[section]
\newtheorem{prop}{Proposition}[section]
\newtheorem{definition}{Definition}[section]
\newtheorem{con}{Conjecture}[section]
\newtheorem{Main}{Main Result}

\title{On the $L^\infty-$maximization of the solution of Poisson's equation: Brezis-Gallouet-Wainger type inequalities and applications}

\author{Davit Harutyunyan\thanks{University of California Santa Barbara, harutyunyan@ucsb.edu} and Hayk Mikayelyan\thanks{School of Mathematical Sciences, University of Nottingham Ningbo, 199 Taikang East Road, Ningbo 315100, PR China   \hfill Hayk.Mikayelyan@nottingham.edu.cn}}

\maketitle

\begin{abstract}
For the solution of the Poisson problem with an $L^\infty$ right hand side
\begin{equation*}
\begin{cases} -\Delta u(x) = f (x)  & \mbox{in } D,
 \\
 u=0  & \mbox{on } \partial D,  \end{cases}
\end{equation*}
we derive an optimal estimate of the form
$$
\|u\|_\infty\leq \|f\|_\infty \sigma_D(\|f\|_1/\|f\|_\infty),
$$
where $\sigma_D$ is a modulus of continuity defined in the interval $[0, |D|]$ and depends only on the domain $D$.
In the case when $f\geq 0$ in $D$ the inequality is optimal for any domain and for any values of $\|f\|_1$ and $\|f\|_\infty.$
We also show that
$$
\sigma_D(t)\leq\sigma_B(t),\text{ for }t\in[0,|D|],
$$
where $B$ is a ball and $|B|=|D|$. Using this optimality property of $\sigma,$ we derive Brezis-Galloute-Wainger type inequalities on
the $L^\infty$ norm of $u$ in terms of the $L^1$ and $L^\infty$ norms of $f.$ The estimates have explicit coefficients depending on the space dimension $n$ and turn to equality for a specific choice of $u$ when the domain $D$ is a ball.
As an application we derive $L^\infty-L^1$ estimates on the $k-$th Laplace eigenfunction of the domain $D.$
\end{abstract}


\section{Introduction and problem setting}
\label{sec:1}
The Brezis-Gallouet inequality [\ref{bib:Bre.Gal.}] is a crucial tool in establishing the unique solvability of initial boundary value problem for
the nonlinear Schr\"odinger equation with zero Dirichlet data on a smooth domain in $\mathbb R^2$ as shown by Brezis and Gallouet in [\ref{bib:Bre.Gal.}]. The inequality asserts the following: \textit{For any smooth bounded domain $\Omega,$ there exists a constant $C>0$ depending only on $\Omega,$ such that for any function $u\in H^2(\Omega)$ with $\|u\|_{H^1(\Omega)}\leq 1,$ there holds: }
\begin{equation}
\label{1.1}
\|u\|_{L^\infty(\Omega)}\leq C(\Omega)\left(1+\sqrt{\log(1+\|u\|_{H^2(\Omega)})}\right).
\end{equation}
Brezis and Wainger [\ref{bib:Bre.Wai.}] extended (\ref{1.1}) to higher dimensions proving the following: \textit{Assume $k,l,n\in\mathbb N$ and $q>0$ are such that $1\leq k<l,$ $k\leq n$ and $ql>n.$ Then there exists a constant $C>0$ such that for any $u\in W^{l,q}(\mathbb R^n)$ with
$\|u\|_{W^{k,\frac{n}{k}}(\mathbb R^n)}=1,$ one has the estimate}
\begin{equation}
\label{1.2}
\|u\|_{L^\infty(\mathbb R^n)}\leq C\left(1+\left(\log(1+\|u\|_{W^{l,q}(\mathbb R^n)})\right)^{\frac{n-k}{n}}\right).
\end{equation}
The Brezis-Gallouet-Wainger and similar inequalities and sharp constants in them have been extensively studied in different frameworks in
[\ref{bib:Engler},\ref{bib:Gorka},\ref{bib:Maz.Sha.},\ref{bib:Mor.Sat.Sav.Wad.},\ref{bib:Mor.Sat.Wad.},\ref{bib:Sawano}]. The estimate has been known to be a central tool in the study of several types of partial differential equations [\ref{bib:Gig.Gig.Saa.},\ref{bib:Kre.Maz.}], in particular Navier-Stokes equations and turbulence [\ref{bib:Foi.Man.Ros.Tem.},\ref{bib:Kat.Pon.},\ref{bib:Vishik}].
In the present paper we are concerned with estimating the $L^\infty$ norm of a function $u$ by the $L^1$ and $L^\infty$ norms of the Laplacian
$\Delta u.$ More precisely we are concerned with the following question: Given a bounded connected open Lipschitz set $D\subset\mathbb R^n$ and a number $0\leq \beta\leq |\Omega|,$ let $u_f$ be the unique weak solution of the Dirichlet boundary value problem
\begin{equation}
\label{1.3}
\begin{cases}
-\Delta u_f=f & \mbox{in } D,\\
u_f=0 & \mbox{on } \partial D,\\
\end{cases}
\end{equation}
where the function $f$ belongs to the class
\begin{equation}
\label{1.4}
\mathcal{C}=\{f\colon D\to\mathbb R \ : \ |f(x)|\leq 1\ \ \text{a.e. in} \ D,\  \text{and}\ \  \int_D|f(x)|dx=\beta\}.
\end{equation}
We consider the maximization problem
\begin{equation}
\label{1.5}
\sup_{f\in \mathcal{C}} \Phi(f),
\end{equation}
where
\begin{equation}
\label{1.6}
\Phi(f)=\sup_{x\in D} |u_f(x)|.
\end{equation}
The question is, \textit{what is the function $f$ having an $L^\infty$ norm bounded by $1$ and a fixed $L^1$ norm, that maximizes the $L^\infty$ norm of the solution $u_f$?} As will be seen in the next section, we obtain a Brezis-Gallouet-Wainger type inequality on the $L^\infty$ norm of $u$ in terms of $L^1$ and $L^\infty$ norms of the Laplacian $\Delta u$ in all dimensions. For given $D\subset\mathbb R^n$ and $f\in\mathcal{C}$ with the additional property that $f\geq 0$ in $D,$ we also characterize the points $\hat x\in\Omega,$ where the function $|u|$ attains its maximal value. Note, that in contrast to Brezis and Wainger we do not impose higher Sobolev regularity upon $u,$ but rather we assume that $u\in H_0^1(\Omega)$ such that $\Delta u\in L^\infty.$ The striking novelty here is that in the obtained inequalities the coefficients are explicit in the space dimension $n$ and also they turn into an equality for a specific choice of the function $u,$ when the domain $D$ is a ball. As an application of the new estimates, we also derive $L^\infty$ estimates on the $k-$th Laplace eigenfunction $u_k$ in terms of its $L^1$ norm and the $k-$th eigenvalue $\lambda_k,$ again with explicit coefficients.

Typically working with $L^1$ and $L^\infty$ norms of solutions of partial differential equations is a more delicate task than dealing with $L^p$ norms for $1<p<\infty$. It is well known for instance, that one cannot estimate the $L^\infty$ norm of the solution $u$ by the $L^1$ norm of $f$. The trivial counter-example for the Poisson problem in the ball $B_1$
\begin{equation}
\label{1.7}
\begin{cases}
 -\Delta u(x) = f (x)  & \mbox{in } B_1,
 \\
 u=0  & \mbox{on } \partial B_1,
\end{cases}
\end{equation}
is given by the family of radially symmetric functions
\begin{equation}
\label{1.8}
u_r(|x|)=
\begin{cases}
F(1)-F(|x|)  & \mbox{in } B_1\setminus B_r,
 \\
 F(1)-F(r)+\frac{1}{2r^{n-2}}-\frac{1}{2r^n} |x|^2 & \mbox{in }  B_r,
\end{cases}
\end{equation}
where $F(|x|)$ is the fundamental solution multiplied by a dimensional constant such that $F'(r)= \frac{1}{r^{n-1}}$. Then all the functions $f_r(x)=-\Delta u_r(x)=\frac{n}{r^n}\chi_{B_r}(x)$, $0<r<1$, have the same $L^1$ norm, while the family of solutions $u_r$ are unbounded in $L^\infty.$



\section{Main results}
\label{sec:2}
In what follows all the norms will be over the domain $D$ unless otherwise specified. Thus for the sake of convenience we will leave out the domain $D$ from the norm notation $\|v\|_{L^p(D)}$ simply using the notation $\|v\|_{p}$ when there is no ambiguity.
We consider first the case when $f$ is nonnegative. For that purpose denote
\begin{equation}
\label{2.1}
\mathcal{C^{+}}=\{f\colon D\to\mathbb R \ : \ 0\leq f(x)\leq 1\ \ \text{a.e. in} \ D,\  \text{and}\ \  \int_Df(x)dx=\beta\}.
\end{equation}
We have the following existence and characterization theorem.
\begin{thm}[Nonnegative Laplacian]
\label{th:2.1}
The maximization problem
\begin{equation}
\label{2.2}
\sup_{f\in\mathcal{C^{+}}}\Phi(f),
\end{equation}
has a solution $\hat{f}\in\mathcal{C^{+}}$. For a maximizer $\hat{f}$ there exists a unique point $\hat{x}$ such that
\begin{equation}
\label{2.3}
\hat{f}(x)=\chi_{\{y\,:\,G(\hat{x},y)>\alpha_{\hat{x}}\}}(x)\in \mathcal{C^{+}},
\end{equation}
where $G(x,y)$ is the Green's function and $\alpha_{\hat{x}}$ is chosen to satisfy
\begin{equation}
\label{2.4}
| \{y\,:\,G(\hat{x},y)>\alpha_{\hat{x}}\}|=\beta.
\end{equation}
Moreover, for the solution $-\Delta \hat{u}=\hat{f},$ $\hat{u}\in H_0^1(D),$ one has the maximality inequality
\begin{equation}
\label{2.5}
\hat{u}(x)\leq \hat{u}(\hat{x})\quad\text{ for any }\quad x\in D,
\end{equation}
and the point $\hat{x}$ satisfies the condition
\begin{equation}
\label{2.6}
0=\nabla \hat{u}(\hat{x})=\int_D \nabla_{x} G(\hat{x}, y) \chi_{\{G(\hat{x}, y)>\alpha_{\hat{x}}\}}dy.
\end{equation}
\end{thm}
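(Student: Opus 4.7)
The plan is to split the proof into three steps: existence by the direct method, characterization of a maximizer via the bathtub principle, and elliptic regularity combined with differentiation under the integral sign for the gradient identity. For existence, $\mathcal{C}^+$ is bounded, convex, and weakly-$*$ closed in $L^\infty(D)$, hence weakly-$*$ compact by Banach--Alaoglu. Using the Green representation $u_f(x) = \int_D G(x,y)\,f(y)\,dy$ together with $G(x,\cdot)\in L^1(D)$ for every $x\in\bar D$, a weakly-$*$ convergent sequence $f_n \rightharpoonup^* f$ yields pointwise convergence $u_{f_n}(x)\to u_f(x)$. The uniform bound $\|f_n\|_\infty\le 1$ combined with Calder\'on--Zygmund estimates puts $\{u_{f_n}\}$ in a bounded subset of $W^{2,p}(D)$ for every $p<\infty$, which embeds compactly in $C^0(\bar D)$ for $p>n$; combining the two convergences gives $u_{f_n}\to u_f$ uniformly, so $\Phi(f_n)\to\Phi(f)$ and the supremum is attained.

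Next, let $\hat f$ be a maximizer with $\hat u := u_{\hat f}$, and assume $\beta>0$ (otherwise the statement is vacuous). The strong minimum principle applied to the superharmonic $\hat u$ with zero boundary data and $\hat f\not\equiv 0$ gives $\hat u > 0$ in $D$, so $\hat u$ attains its maximum at some interior $\hat x\in D$ with $\Phi(\hat f) = \hat u(\hat x)$. The decisive step is to freeze $\hat x$ and rewrite
$$
\hat u(\hat x) = \int_D G(\hat x, y)\,\hat f(y)\,dy
$$
as a linear functional of $f$ to be maximized over $\mathcal{C}^+$. Real-analyticity and non-constancy of $G(\hat x,\cdot)$ on $D\setminus\{\hat x\}$ force each of its level sets to have Lebesgue measure zero, so the bathtub principle of Lieb and Loss produces a \emph{unique} maximizer $\tilde f := \chi_{\{G(\hat x,\cdot) > \alpha_{\hat x}\}}$ with $\alpha_{\hat x}$ determined by (2.4). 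The chain
$$
\Phi(\hat f) \ge \Phi(\tilde f) \ge u_{\tilde f}(\hat x) \ge u_{\hat f}(\hat x) = \Phi(\hat f)
$$
collapses to equalities; the bathtub equality then forces $\hat f = \tilde f$ a.e., proving (2.3), while (2.5) is automatic from the choice of $\hat x$.

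For (2.6), $\hat f\in L^\infty(D)$ places $\hat u$ in $C^{1,\alpha}_{\mathrm{loc}}(D)$ by standard elliptic regularity, so $\nabla\hat u(\hat x) = 0$ at the interior maximum. Differentiation of the Green representation under the integral sign, justified by local integrability in $y$ of $|\nabla_x G(\hat x,\cdot)|$, then yields $\nabla\hat u(\hat x) = \int_D \nabla_x G(\hat x, y)\,\hat f(y)\,dy$, and substituting for $\hat f$ gives (2.6).

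The main obstacle I expect is the \emph{uniqueness} of $\hat x$. If $\hat u$ attained its supremum at two distinct interior points $\hat x_1\ne \hat x_2$ with thresholds $\alpha_1,\alpha_2$, the bathtub argument applied at each would force the superlevel sets $E_i := \{G(\hat x_i,\cdot)>\alpha_i\}$ to coincide up to a null set. On their (essentially) common boundary both equalities $G(\hat x_i,y) = \alpha_i$ hold, so the harmonic function $h := \alpha_2 G(\hat x_1,\cdot) - \alpha_1 G(\hat x_2,\cdot)$ vanishes there and also on $\partial D$; the maximum principle on the regular harmonic region $D\setminus \bar E$ then forces $h\equiv 0$ there, and real-analyticity propagates this to $D\setminus\{\hat x_1,\hat x_2\}$, contradicting the distinct singular behaviors of $h$ at the two poles. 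The delicate point is converting the a.e.\ equality $E_1 = E_2$ into an honest coincidence of boundaries, which requires the smoothness of $G(\hat x_i,\cdot)$ away from the poles and the fact that the two level sets themselves have zero measure.
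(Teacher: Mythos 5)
Your proposal follows the same overall plan as the paper: direct method for existence, the Lieb--Loss bathtub principle at the maximum point $\hat x$ for the characterization (\ref{2.3})--(\ref{2.5}), and interior $C^1$ regularity for (\ref{2.6}). The characterization and gradient arguments match the paper's essentially verbatim; your chain of inequalities $\Phi(\hat f)\ge\Phi(\tilde f)\ge u_{\tilde f}(\hat x)\ge u_{\hat f}(\hat x)=\Phi(\hat f)$ together with the uniqueness part of the bathtub principle is exactly the intended mechanism. Your uniqueness argument for $\hat x$ (common level boundary, maximum principle in $D\setminus\bar E$, real-analytic continuation, contradiction at the poles) is in fact spelled out more carefully than the paper's version, which passes directly from equality of the two indicator functions to the identity $G(\hat x,\cdot)-\tfrac{\alpha_{\hat x}}{\alpha_{\tilde x}}G(\tilde x,\cdot)\equiv 0$ on the complement without exhibiting the intermediate maximum-principle step; your version supplies the missing justification.

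The one place where you deviate is the compactness argument in the existence step, and there you open a genuine gap: you invoke a \emph{global} Calder\'on--Zygmund estimate to place $\{u_{f_n}\}$ in a bounded subset of $W^{2,p}(D)$ for $p>n$ and then use the compact embedding $W^{2,p}(D)\hookrightarrow C^0(\bar D)$. The domain $D$ is only assumed Lipschitz, and $W^{2,p}$ regularity up to the boundary for the Dirichlet Laplacian requires more (e.g.\ $C^{1,1}$ or convexity); it can fail on Lipschitz domains. The paper avoids this by using \emph{interior} H\"older estimates (Gilbarg--Trudinger, Theorem~8.22), together with the uniform barrier $0\le u_f\le\bar u$ where $-\Delta\bar u=1$, which forces the near-maximum points $x_k$ into a fixed compact subset $D_\delta$; the oscillation estimate plus the strong $L^2$ convergence $u_k\to u_0$ then yield $u_k(x_k)-u_0(x_k)\to 0$ and hence $\sup u_0\ge m$. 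To repair your version without changing its spirit, replace the global $W^{2,p}$ bound by interior $W^{2,p}_{\mathrm{loc}}$ (or interior H\"older) bounds on $D_\delta$, and use the barrier $\bar u$ to show that the maxima of $u_{f_n}$ and of $u_f$ live in $D_\delta$; this gives uniform convergence on $D_\delta$, which suffices for $\Phi(f_n)\to\Phi(f)$.
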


\begin{thm}[Modulus of continuity]
\label{th:2.2}
For any $f\in L^\infty$, such that $\|f\|_\infty>0$ we have the inequality
\begin{equation}
\label{2.7}
\|u\|_\infty\leq \|f\|_\infty \sigma_D(\|f\|_1/\|f\|_\infty),
\end{equation}
where $\sigma_D$ is a modulus of continuity defined in the interval $[0, |D|]$ and depending only on the domain $D.$
Moreover, for any domain $D$ and values of $\|f\|_1$ and $\|f\|_\infty$ there exists a function $f\in\mathcal{C^+}$, such that the inequality
(\ref{2.7}) turns to an equality.
\end{thm}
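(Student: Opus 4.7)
The plan is to let Theorem \ref{th:2.1} do the heavy lifting. Set
$$
\sigma_D(t) := \sup\bigl\{\|u_f\|_\infty : f \in \mathcal{C^{+}},\ \|f\|_1 = t\bigr\}, \qquad t \in [0,|D|],
$$
and then verify that $\sigma_D$ is a modulus of continuity and that the general inequality (\ref{2.7}) reduces to an assertion on this class. Sharpness on $\mathcal{C^{+}}$ is then immediate from Theorem \ref{th:2.1}, which exhibits an explicit maximizer for every admissible mass. The reduction to the nonnegative case uses two observations: by homogeneity of (\ref{1.3}) we may scale so that $\|f\|_\infty = 1$, and the positivity of the Green's function yields
$$
|u_f(x)| = \Bigl|\int_D G(x,y) f(y)\,dy\Bigr| \leq \int_D G(x,y)|f(y)|\,dy = u_{|f|}(x),
$$
with $|f|\in\mathcal{C^{+}}$ and $\||f|\|_1 = \|f\|_1$, whence $\|u_f\|_\infty \leq \sigma_D(\|f\|_1)$; undoing the scaling gives (\ref{2.7}).

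Having reduced to $\mathcal{C^{+}}$, the next step is to verify the three defining properties of a modulus of continuity. The value $\sigma_D(0)=0$ is trivial, and monotonicity follows from the comparison principle, since any admissible $f_1$ of mass $t_1$ may be enlarged to some $f_2 \in \mathcal{C^{+}}$ with $f_2 \geq f_1$ and $\|f_2\|_1 = t_2 \geq t_1$. For continuity I would prove the subadditivity bound
$$
\sigma_D(t_2) \leq \sigma_D(t_1) + \sigma_D(t_2 - t_1), \qquad 0 \leq t_1 \leq t_2 \leq |D|.
$$
This uses that the Theorem \ref{th:2.1} maximizer for mass $t_2$ is an indicator $\chi_{\hat A}$ of a set of measure $t_2$; choosing any $E \subset \hat A$ with $|E|=t_1$ and writing $\chi_{\hat A} = \chi_E + \chi_{\hat A \setminus E}$ as two admissible nonnegative pieces of masses $t_1$ and $t_2-t_1$, linearity of (\ref{1.3}) gives $u_{\chi_{\hat A}} = u_{\chi_E} + u_{\chi_{\hat A \setminus E}}$, and both summands are pointwise nonnegative, so their $L^\infty$ norms are bounded by $\sigma_D(t_1)$ and $\sigma_D(t_2-t_1)$ respectively. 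Together with monotonicity this gives $0 \leq \sigma_D(t_2) - \sigma_D(t_1) \leq \sigma_D(t_2-t_1)$, so continuity on $[0,|D|]$ reduces to continuity of $\sigma_D$ at the origin.

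The main obstacle is precisely this continuity at $0$: showing $\sigma_D(s)\to 0$ as $s\to 0^+$. From Theorem \ref{th:2.1},
$$
\sigma_D(s) = \int_{E_{\hat x_s}(s)} G(\hat x_s, y)\,dy \leq \sup_{x \in D}\int_{E_x(s)} G(x,y)\,dy,
$$
where $E_x(s)$ denotes the superlevel set of $G(x,\cdot)$ of measure $s$. H\"older's inequality bounds the right-hand side by $\sup_{x \in D}\|G(x,\cdot)\|_{L^p(D)}\, s^{1/p'}$ for any conjugate pair $(p,p')$, and the standard pointwise estimate $G(x,y) \leq C|x-y|^{2-n}$ (respectively $C|\log|x-y||$ when $n=2$) on a bounded Lipschitz domain yields a uniform $L^p$ bound for any $p<n/(n-2)$ (any $p<\infty$ for $n=2$). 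The uniformity of this bound as $x$ approaches $\partial D$ is the one genuine analytic input; once it is in hand, the right-hand side above tends to zero as $s\to 0^+$, completing the proof.
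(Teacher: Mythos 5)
Your proposal is correct and follows the paper's route for the core inequality: normalize so $\|f\|_\infty=1$, define $\sigma_D$ as the extremal value over $\mathcal{C}^+$ furnished by Theorem~\ref{th:2.1}, and reduce the sign-changing case to the nonnegative one via the comparison $|u_f|\leq u_{|f|}$ (the paper writes $u_{-|f|}\leq u_f\leq u_{|f|}$, which is the same point). Where you go beyond the paper is in actually verifying that the function $\sigma_D$ so defined is a modulus of continuity. The paper simply exhibits the formula $\sigma_D(\|f\|_1)=\hat u(\hat x)=\int_D G(\hat x,y)\chi_{\{G(\hat x,y)>\alpha_{\hat x}\}}dy$ and asserts the conclusion; it never checks monotonicity, continuity, or $\sigma_D(0)=0$. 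Your monotonicity argument (enlarging $f_1$ within $\mathcal{C}^+$ and invoking the comparison principle), your subadditivity decomposition of the extremal indicator $\chi_{\hat A}=\chi_E+\chi_{\hat A\setminus E}$, and your continuity-at-zero estimate via a uniform $L^p$ bound on $G(x,\cdot)$ (using the fundamental-solution domination and H\"older) are all sound and fill a genuine gap in the published argument. One minor remark: the subadditivity step does not actually need the bang-bang structure of the maximizer; for any $f\in\mathcal{C}^+$ of mass $t_2$ one can split $f=\frac{t_1}{t_2}f+\frac{t_2-t_1}{t_2}f$ and argue identically, which keeps the verification independent of the existence/characterization content of Theorem~\ref{th:2.1} and would let you establish the modulus properties before invoking that theorem for the sharpness claim.
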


Next we give an optimality condition on $\sigma_D.$

\begin{thm}[Estimates on $\sigma_D$ and $\|u\|_{\infty}$]
\label{th:2.3}
We have the optimality estimate
\begin{equation}
\label{2.8}
\sigma_D(t)\leq\sigma_B(t),\quad\text{ for }\quad t\in[0,|D|],
\end{equation}
where $B$ is a ball with the same measure as $D,$ i.e., $|B|=|D|.$ The function $\sigma_B$ can be calculated explicitly and has the form
\begin{align}
\label{2.9}
\sigma_{B}(t)&=\frac{(n-1)^2+1}{2n(n-2)\omega_n^{2/n}}t^{\frac{2}{n}}-\frac{1}{n(n-2)\omega_nR^{n-2}}t,\quad \text{for}\quad n>2,\\ \nonumber
\sigma_{B}(t)&=\left(\frac{1}{2}\ln\pi +\frac{1}{2\pi}(1+\ln R)\right)t-\frac{1}{4\pi} t\ln t,\quad \text{for}\quad n=2,
\end{align}
where $R$ is the radius of $B$ and $\omega_n$ is the volume of the unit ball in $\mathbb R^n.$ For the norm $\|u\|_{\infty}$ we have the estimates
\begin{align}
\label{2.10}
\|u_f\|_{\infty}& \leq \frac{(n-1)^2+1}{2n(n-2)\omega_n^{2/n}}\|f\|_{1}^{\frac{2}{n}}\|f\|_{\infty}^{\frac{n-2}{n}}-
\frac{1}{n(n-2)\omega_nR^{n-2}}\|f\|_{1},\quad \text{for}\quad n>2 \\ \nonumber
\|u_f\|_{\infty} &\leq \left(\frac{1}{2}\ln\pi +\frac{1}{2\pi}(1+\ln R)\right)\|f\|_{1}-
\frac{1}{4\pi}\|f\|_{1}\ln\frac{\|f\|_{1}}{\|f\|_{\infty}},\quad \text{for}\quad n=2,
\end{align}
here, as already mentioned, $R=\left(\frac{|D|}{\omega_n}\right)^{\frac{1}{n}}.$
\end{thm}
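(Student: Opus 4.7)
The theorem splits naturally into two parts: the symmetrization bound $\sigma_D(t)\leq\sigma_B(t)$ and the explicit evaluation of $\sigma_B$ on a ball. The pointwise estimates (2.10) then follow by combining (2.7) with these two pieces.

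For the inequality $\sigma_D(t)\leq\sigma_B(t)$ my plan is to use Schwarz symmetrization in the spirit of Talenti's comparison theorem. Fix $f\in\mathcal{C}^+$ on $D$ with $\|f\|_1=t$ and $\|f\|_\infty=1$, and let $f^{\ast}$ denote its symmetric decreasing rearrangement on the ball $B$; equimeasurability gives $f^{\ast}\in\mathcal{C}^+$ on $B$ with the same $L^1$ and $L^\infty$ norms. Let $v$ solve $-\Delta v=f^{\ast}$ in $B$ with $v=0$ on $\partial B$. Talenti's inequality yields $u_f^{\ast}(x)\leq v(x)$ pointwise in $B$, where $u_f^{\ast}$ is the symmetric decreasing rearrangement of the nonnegative function $u_f$. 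Since $\|u_f\|_\infty=u_f^{\ast}(0)$ and $\|v\|_\infty=v(0)$ by radial monotonicity of $v$, this gives $\|u_f\|_\infty\leq\|v\|_\infty\leq\sigma_B(t)$ after applying Theorem 2.2 to $B$. Taking the supremum over $f\in\mathcal{C}^+$ on $D$ and invoking the sharpness clause of Theorem 2.2 yields (2.8).

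For the explicit formula I would invoke Theorem 2.1 on $B$: the maximizer has the form $\hat{f}=\chi_{\{G_B(\hat{x},\cdot)>\alpha\}}$ with superlevel set of measure $t$, and the corresponding $\hat{u}$ attains its maximum at $\hat{x}$ with $\nabla\hat{u}(\hat{x})=0$. The rotational symmetry of $B$, together with the radial monotonicity of $G_B(\cdot,y)$ in its first argument, should force $\hat{x}=0$; I would justify this either by Steiner symmetrization of $\hat{f}$ across a hyperplane through $0$ perpendicular to $\hat{x}$, or by checking that the stationarity condition (2.6) is compatible with the rotational symmetry of $B$ only at the center. Once $\hat{x}=0$ the superlevel set becomes the concentric ball $B_\rho$ with $\rho=(t/\omega_n)^{1/n}$, so $\sigma_B(t)=\int_{B_\rho}G_B(0,y)\,dy$. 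Plugging in the explicit radial Green's functions $G_B(0,y)=(n(n-2)\omega_n)^{-1}(|y|^{2-n}-R^{2-n})$ for $n\geq 3$ and $G_B(0,y)=(2\pi)^{-1}\ln(R/|y|)$ for $n=2$, and integrating in polar coordinates, produces the closed-form expressions in (2.9). The estimates (2.10) follow by inserting (2.9) into (2.7) with $t=\|f\|_1/\|f\|_\infty$ and multiplying through by $\|f\|_\infty$.

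\textbf{Main obstacle.} In my view the delicate step is the reduction to $\hat{x}=0$ for the ball: without it one only obtains an upper bound on $\sigma_B(t)$ rather than an equality, and the sharpness-on-balls statement from Theorem 2.2 is lost. The Talenti comparison, the radial integration and the deduction of (2.10) are essentially mechanical once this symmetry reduction is in place.
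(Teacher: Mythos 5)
Your overall plan is sound and Talenti-based, like the paper's, but you apply Talenti to a different object, and this changes the logical structure. You symmetrize the solution $u_f$ directly: $u_f^*\leq v$ where $-\Delta v=f^*$ in $B$, hence $\|u_f\|_\infty\leq\|v\|_\infty\leq\sigma_B(t)$. The paper instead applies Talenti (via a Dirac-measure approximation) to the Green's function, proving the auxiliary Lemma $G^*_{\hat{x},D}(y)\leq G_B(0,y)$, and then uses the representation from Theorem~2.1, $\sigma_D(t)=\int_D G_D(\hat{x},y)\chi_{\{G_D>\alpha\}}\,dy$, together with equimeasurability, to pass directly to $\sigma_D(t)\leq\int_{B_\rho}G_B(0,y)\,dy$ with $|B_\rho|=t$. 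This single chain of inequalities yields a \emph{concrete} number on the right, so the paper does not need to know $\sigma_B$ a priori: the explicit example $f_r=\tfrac{n}{r^n}\chi_{B_r}$ then shows that $\int_{B_\rho}G_B(0,y)\,dy$ is also a lower bound for $\sigma_B(t)$, and (2.8) and (2.9) drop out simultaneously. Your route proves $\sigma_D\leq\sigma_B$ abstractly first and then must evaluate $\sigma_B$ by locating the ball's maximizer, which forces the reduction $\hat{x}=0$ that you correctly flag as the delicate point. That reduction can be avoided entirely: applying the upper bound $\sigma_D(t)\leq\int_{B_\rho}G_B(0,y)\,dy$ with $D=B$ already gives the needed inequality, and the radial example supplies the matching lower bound, so $\sigma_B(t)=\int_{B_\rho}G_B(0,y)\,dy$ without ever identifying the maximizing point. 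Your approach is correct, and the direct symmetrization of $u_f$ is arguably cleaner for proving (2.8) alone; but for the package (2.8)--(2.9) the paper's Green's-function route is more economical precisely because it eliminates the $\hat{x}=0$ issue.

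One minor point: you invoke Theorem~3.2 (Talenti) for $f\in\mathcal{C}^+$, which is merely $L^\infty$, while the lemma as stated assumes $f$ smooth; an approximation step (as the paper itself does in the Green's-function corollary) should be acknowledged. Also, if you do carry out the "mechanical" radial integration of $\int_{B_\rho}G_B(0,y)\,dy$, it is worth doing carefully and double-checking against (2.9), since the coefficient of $t^{2/n}$ is sensitive to the factor coming from $\int_0^\rho s^{2-n}\cdot n\omega_n s^{n-1}\,ds=n\omega_n\rho^2/2$.
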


For the general case we have the following theorem which follows from Theroem~\ref{th:2.3}.

\begin{thm}[Sign-changing Laplacian]
\label{th:2.4}
The solution $u_f\in H_0^1(D)$ of the Poisson problem
$$-\Delta u=f\quad\text{in}\quad D,$$
where $f\in L^\infty(D),$ fulfills the following inequalities:
\begin{equation}
\label{2.11}
\|u_f\|_\infty \leq\max \left\{\|f^+\|_\infty \sigma_D(\|f^+\|_1/\|f^+\|_\infty)\,\, ,\,\, \|f^-\|_\infty \sigma_D(\|f^-\|_1/\|f^-\|_\infty)    \right\},
\end{equation}
where $f^{\pm}=\max (\pm f, 0)$. Also we have the bound
\begin{equation}
\label{signchanging}
\|u\|_\infty \leq \|f\|_\infty \left[ \sigma_D\left( \frac{1}{2}\left[\tfrac{1}{\|f\|_\infty}\left|I_f\right| + |D| \right]\right)-  v(\hat{x})\right],
\end{equation}
where the function $v\in H_0^1(D)$ is the solution to the Poisson problem $-\Delta v=1$ in $D$, the point $\hat{x}\in D$ is the point where the maximum of $|u|$ is achieved, and $I_f=\int_D f(x)dx.$
Moreover, one has the same estimates as in (2.10):
\begin{align}
\label{2.12}
\|u_f\|_{\infty}& \leq \frac{(n-1)^2+1}{2n(n-2)\omega_n^{2/n}}\|f\|_{1}^{\frac{2}{n}}\|f\|_{\infty}^{\frac{n-2}{n}}-
\frac{1}{n(n-2)\omega_nR^{n-2}}\|f\|_{1},\quad \text{for}\quad n>2 \\ \nonumber
\|u_f\|_{\infty} &\leq \left(\frac{1}{2}\ln\pi +\frac{1}{2\pi}(1+\ln R)\right)\|f\|_{1}-
\frac{1}{4\pi}\|f\|_{1}\ln\frac{\|f\|_{1}}{\|f\|_{\infty}},\quad \text{for}\quad n=2.
\end{align}
\end{thm}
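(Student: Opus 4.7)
The three estimates of Theorem~\ref{th:2.4} all follow from Theorems~\ref{th:2.2} and~\ref{th:2.3} via suitable decompositions of the sign-changing right-hand side~$f$.

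For~(\ref{2.11}) I would split $f = f^+ - f^-$ and use linearity of the Dirichlet problem to write $u_f = u_{f^+} - u_{f^-}$, where $-\Delta u_{f^\pm} = f^\pm$ with zero boundary data. Since $f^\pm \geq 0$, the weak maximum principle forces $u_{f^\pm} \geq 0$ in $D$. At a maximizing point $\hat{x}$ of $|u_f|$ one then has, depending on the sign of $u_f(\hat{x})$, either $|u_f(\hat{x})| = u_{f^+}(\hat{x}) - u_{f^-}(\hat{x}) \leq \|u_{f^+}\|_\infty$ or $|u_f(\hat{x})| \leq \|u_{f^-}\|_\infty$. Applying Theorem~\ref{th:2.2} to each of $f^\pm \in \mathcal{C^{+}}$ (after suitable rescaling) and taking the maximum over the two cases delivers~(\ref{2.11}).

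For~(\ref{signchanging}) I would translate $f$ by a constant to obtain a nonnegative datum. Set $M := \|f\|_\infty$ and write $f = -M + h$ with $h := f + M \geq 0$. Linearity gives $u_f = -Mv + u_h$, where $v$ is the torsion function ($-\Delta v = 1$, $v|_{\partial D}=0$) and $u_h\geq 0$ solves $-\Delta u_h = h$ with zero boundary data. The shifted datum satisfies $\|h\|_1 = I_f + M|D|$ and $\|h\|_\infty \leq 2M$. Theorem~\ref{th:2.2} then gives $\|u_h\|_\infty \leq \|h\|_\infty\,\sigma_D(\|h\|_1/\|h\|_\infty)$, and the key observation is that the quantity $t \mapsto t\,\sigma_D(c/t)$ (with $c$ fixed) is monotone nondecreasing in $t$, which follows directly from the variational characterization of $\sigma_D$ underlying Theorem~\ref{th:2.2}: relaxing the $L^\infty$-constraint on the data can only enlarge the admissible class. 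This lets one express $\|u_h\|_\infty$ purely in terms of $M$, $I_f$ and $|D|$. Substituting into $|u_f(\hat{x})| \leq -Mv(\hat{x}) + \|u_h\|_\infty$ and invoking the $f\mapsto -f$ symmetry (which preserves both $|I_f|$ and $v(\hat{x})$) to replace $I_f$ by $|I_f|$ produces~(\ref{signchanging}).

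For~(\ref{2.12}), since Theorem~\ref{th:2.2} holds for arbitrary $f\in L^\infty$, one has $\|u_f\|_\infty \leq \|f\|_\infty\,\sigma_D(\|f\|_1/\|f\|_\infty)$, which combined with the ball comparison $\sigma_D \leq \sigma_B$ from Theorem~\ref{th:2.3} and the explicit form~(\ref{2.9}) of $\sigma_B$ yields~(\ref{2.12}) by direct substitution, in both regimes $n>2$ and $n=2$. The main obstacle is~(\ref{signchanging}): without the scaling monotonicity of $t\mapsto t\,\sigma_D(c/t)$, the a priori bound $\|h\|_\infty \leq 2M$ would contaminate the final inequality with a spurious multiplicative constant in front of $\sigma_D$, and one must track this factor simultaneously with the additive torsion contribution $-Mv(\hat{x})$. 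Parts~(\ref{2.11}) and~(\ref{2.12}) reduce much more directly, via linearity and the ball comparison respectively.
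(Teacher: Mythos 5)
Your approach matches the paper's step by step: the $f^{\pm}$ splitting for (\ref{2.11}), the translation $h=f+\|f\|_\infty$ together with the torsion function $v$ for (\ref{signchanging}), and the direct appeal to Theorems~\ref{th:2.2}--\ref{th:2.3} for (\ref{2.12}). One genuine refinement on your part is making explicit the monotonicity of $t\mapsto t\,\sigma_D(c/t)$, which the paper uses tacitly when it replaces $\|g\|_\infty$ by $2\|f\|_\infty$ both as the prefactor and inside the argument of $\sigma_D$; your variational justification (relaxing the $L^\infty$-constraint enlarges the admissible class) is exactly the right reason.

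That said, the step you describe does not actually land on (\ref{signchanging}) as stated, and this is worth flagging. Tracking your own chain of inequalities,
\[
|u_f(\hat x)|\le \|u_h\|_\infty - \|f\|_\infty v(\hat x)\le 2\|f\|_\infty\,\sigma_D\!\left(\tfrac12\Bigl[\tfrac{I_f}{\|f\|_\infty}+|D|\Bigr]\right)-\|f\|_\infty v(\hat x),
\]
one obtains a factor $2$ in front of $\sigma_D$ that is absent from the displayed inequality (\ref{signchanging}). The monotonicity of $t\,\sigma_D(c/t)$ is needed to pass from $\|h\|_\infty$ to $2\|f\|_\infty$, but it does not eliminate this factor; your remark that the monotonicity prevents a ``spurious multiplicative constant'' is therefore not quite right. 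In fact the paper's own derivation in the subsection ``The first approach'' produces the same explicit factor $2$ in its bound for $\|u_g\|_\infty$ and then silently drops it when passing to the final display, so the discrepancy appears to originate in the statement of the theorem rather than in your argument. You should either carry the factor $2$ through explicitly or explain where it disappears; as written, ``produces (\ref{signchanging})'' overstates what the computation shows. Also, be aware that $|u_f(\hat x)|\le \|u_h\|_\infty-\|f\|_\infty v(\hat x)$ only holds when $u_f(\hat x)\ge 0$; the case $u_f(\hat x)<0$ must be handled by the $f\mapsto -f$ reflection you mention, and both cases then combine to give $|I_f|$ in the final bound. Parts (\ref{2.11}) and (\ref{2.12}) are fine and coincide with the paper's proof.
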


\subsection{An application to the Laplace eigenfunctions}
\label{sec:2.1}

As a consequence of the estimates (\ref{2.12}), we derive an estimate on the $L^\infty$ norm of the $k-$th eigenfunction of the Laplace operator in the domain $D.$ Assume as usual that $(\lambda_k,u_k)$ is the $k-$th eigenvalue-eigenfunction pair of the Laplace operator, i.e.,
\begin{equation}
\label{2.13}
\begin{cases}
-\Delta u_k=\lambda_k u_k & \mbox{ in } \ \ D,\\
u_k=0 & \mbox{ on } \ \ \partial D.\\
\end{cases}
\end{equation}
We have the following theorem.
\begin{thm}
\label{th:2.5}
The following estimates hold:
\begin{align}
\label{2.14}
\|u_k\|_{\infty}& \leq \frac{2}{n^n(n-2)\omega_n}\left[\lambda_k^{\frac{n}{2}}\left((n-1)^2+1\right)^\frac{n}{2}-\frac{\lambda_kn^{n-1}}{R^{n-2}}\right]\|u_k\|_1,
\quad \text{for}\quad n>2 \\ \nonumber
\|u_k\|_{\infty} &\leq \lambda_k\left(\ln\pi+\frac{1}{\pi}(1+\ln R)+\frac{\lambda_k}{8\pi^2}\right)\|u_k\|_1,\quad \text{for}\quad n=2.
\end{align}
\end{thm}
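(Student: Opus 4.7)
The plan is to reduce Theorem~\ref{th:2.5} to the estimates of Theorem~\ref{th:2.4} by choosing $f = \lambda_k u_k$ in the Poisson problem $-\Delta u_k = f$. With this identification $\|f\|_\infty = \lambda_k\|u_k\|_\infty$ and $\|f\|_1 = \lambda_k\|u_k\|_1$, so inequality (\ref{2.12}) becomes an \emph{implicit} inequality in the single unknown $M := \|u_k\|_\infty$, with $L := \|u_k\|_1$ and $\lambda_k$ appearing only as parameters.

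For $n > 2$, the implicit inequality produced by (\ref{2.12}) has the form
$$
M \leq A\,M^{(n-2)/n} - BL,
$$
where $A = \frac{((n-1)^2+1)\,\lambda_k L^{2/n}}{2n(n-2)\omega_n^{2/n}}$ and $B = \frac{\lambda_k}{n(n-2)\omega_n R^{n-2}}$. I would solve this by a one-step bootstrap: rewrite it as $M + BL \leq A\,M^{(n-2)/n}$; since $M \leq M+BL$ and $t \mapsto t^{(n-2)/n}$ is increasing, one gets $M + BL \leq A\,(M+BL)^{(n-2)/n}$, hence $(M+BL)^{2/n} \leq A$, i.e.\ $M \leq A^{n/2} - BL$. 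Substituting the values of $A$ and $B$ and simplifying $A^{n/2}$ then yields the first line of (\ref{2.14}).

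For $n = 2$, the analogous substitution produces
$$
M \leq C_0\,\lambda_k L + \frac{\lambda_k L}{4\pi}\,\ln(M/L),
$$
with $C_0 = \tfrac{1}{2}\ln\pi + \tfrac{1}{2\pi}(1+\ln R)$. Here the bootstrap used for $n>2$ fails because the logarithm is not a monotone power. Instead I would linearize the logarithm by a pointwise tangent-type inequality, e.g.\ $\ln x \leq x^{\epsilon}/\epsilon$ for all $x>0$ and $\epsilon>0$ (or, equivalently, $\ln x \leq \alpha x - 1 - \ln\alpha$). With the choice $\epsilon = 1/2$ this converts the estimate into a quadratic inequality in $y := M^{1/2}$ of the form $y^2 - \tfrac{\lambda_k}{2\pi} L^{1/2}\, y - C_0\,\lambda_k L \leq 0$; solving that quadratic, squaring, and absorbing the cross term via AM--GM produces a contribution proportional to $\lambda_k^2 L/(8\pi^2)$, matching the $\lambda_k/(8\pi^2)$ factor in the second line of (\ref{2.14}).

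The main obstacle is the precise execution of the $n=2$ linearization: the free parameter in the logarithmic tangent inequality has to be tuned so that, after the quadratic is solved and squared, the constants coincide with the closed form claimed in (\ref{2.14}) without loss. The $n>2$ case, in contrast, is purely algebraic once the implicit inequality is in place, and uses only the monotonicity bootstrap $M+BL \leq A(M+BL)^{(n-2)/n}$ followed by raising to the power $n/2$.
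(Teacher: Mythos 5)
Your reduction to Theorem~\ref{th:2.4} via $f=\lambda_k u_k$ and the resulting implicit inequality
$M\le AM^{(n-2)/n}-BL$ (with $M=\|u_k\|_\infty$, $L=\|u_k\|_1$, and your $A,B$) is exactly the paper's starting point (equation~(\ref{4.2})). The problem is the step after that.

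\textbf{$n>2$.} Your monotonicity bootstrap $M+BL\le A(M+BL)^{(n-2)/n}\Rightarrow M\le A^{n/2}-BL$ is a correct deduction, but it does \emph{not} reproduce (\ref{2.14}). You get
\[
M\le \frac{((n-1)^2+1)^{n/2}\lambda_k^{n/2}}{(2n(n-2))^{n/2}\,\omega_n}\,L \;-\;\frac{\lambda_k}{n(n-2)\omega_n R^{n-2}}\,L,
\]
whereas (\ref{2.14}) reads
\[
M\le \frac{2\,((n-1)^2+1)^{n/2}\lambda_k^{n/2}}{n^n(n-2)\,\omega_n}\,L \;-\;\frac{2\lambda_k}{n(n-2)\omega_n R^{n-2}}\,L.
\]
The leading coefficients coincide only when $\big(\tfrac{n}{2(n-2)}\big)^{n/2}=\tfrac{2}{n-2}$, i.e.\ only for $n=4$; for $n=3$, $5$, $6,\dots$ they differ, and the second term is $-BL$ in your bound versus $-2BL$ in the paper's. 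The two bounds are genuinely incomparable (for small $\sqrt{\lambda_k}\,R$ yours is weaker, for large it is tighter), so your result neither equals nor implies (\ref{2.14}). The paper instead applies Young's inequality $xy\le x^p/p+y^q/q$ ($p=n/2$, $q=n/(n-2)$) to $\|u_k\|_1^{2/n}\|u_k\|_\infty^{(n-2)/n}=(\alpha\|u_k\|_1)^{2/n}(\beta\|u_k\|_\infty)^{(n-2)/n}$ under the constraint $\alpha^2\beta^{n-2}=1$, and then \emph{tunes} $\beta$ so that the $\|u_k\|_\infty$-term on the right equals $\tfrac{1}{2\lambda_k}\|u_k\|_\infty$ and can be absorbed into the left-hand side. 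That specific tuning produces the extra factor $2$ and the $n^n$ in (\ref{2.14}). Your bootstrap, elegant as it is, has no free parameter to tune and therefore lands on a different constant.

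\textbf{$n=2$.} You correctly identify that the bootstrap fails and that one should linearize the logarithm, but the parameter choice matters. The paper uses the inequality $\|u_k\|_1\ln(\|u_k\|_\infty/\|u_k\|_1)\le\sqrt{2\|u_k\|_1\|u_k\|_\infty}$ (i.e.\ $\ln t\le\sqrt{2t}$), followed by $\sqrt{2\|u_k\|_1\|u_k\|_\infty}\le\epsilon\|u_k\|_1+\tfrac{\|u_k\|_\infty}{2\epsilon}$ with $\epsilon=\lambda_k/(4\pi)$, which absorbs $\tfrac12\|u_k\|_\infty$ and yields exactly the $\lambda_k/(8\pi^2)$ term after doubling. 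Your proposed $\ln x\le x^{1/2}/\epsilon$ with $\epsilon=1/2$ gives $\ln x\le 2\sqrt{x}$, which has constant $2$ rather than $\sqrt 2$; solving the resulting quadratic in $y=\sqrt M$ then produces a cross term $b\sqrt{b^2+4c}$ that, when bounded, overshoots the claimed constant for large $\lambda_k$. You flag this as an obstacle, which is fair, but the outline as written does not close the gap, and the $n>2$ case, which you present as complete, also fails to yield the stated formula.

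In short: the reduction $f=\lambda_k u_k$ is right and matches the paper, but the algebraic finish is where both cases diverge from (\ref{2.14}); the paper's Young-inequality-with-tuned-weight argument (absorbing exactly half of $\|u_k\|_\infty$) is what produces the specific constants, and your bootstrap/linearization replacements produce different constants.
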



\section{Proof of the main results}
\label{sec:3}

\begin{proof}[Proof of Theorem~\ref{th:2.1}]
The proof is divided into several steps. Of course, in the first step we will be proving the existence part of the theorem.\\
\textbf{Existence of a maximizer $\hat{f}.$} We start with collecting some useful bounds. First of all note that by the estimate $0\leq f\leq 1$ for $f\in\mathcal{C^+}$ and the maximum principle [\ref{bib:Gil.Tru.}, Theorem~8.19], one has on one hand that the weak solution $u_f\in H_0^1(D)$ of $-\Delta u=f$ in $D$ is nonnegative in $D,$ i.e.,
\begin{equation}
\label{3.1}
u_f(x)\geq 0,\quad\text{for all}\quad f\in\mathcal{C^+}, x\in D.
\end{equation}
On the other hand, if $\bar u\in H_0^1(D)$ is the unique weak solution of $-\Delta \bar u=1$ in $D,$ then we have $-\Delta (\bar u-u_f)=1-f\geq 0$ in $D$ for any $f\in\mathcal{C^+},$ and $\bar u-u_f\in H_0^1(D),$ thus again by the maximum principle we have
\begin{equation}
\label{3.2}
0\leq u_f(x)\leq \bar u(x),\quad\text{for all}\quad f\in\mathcal{C^+}, x\in D.
\end{equation}
By the classical regularity theory we have $\bar u\in C(\bar D),$ thus $\|\bar u\|_{\infty}<\infty.$ Combining the obtained bounds we arrive at
\begin{equation}
\label{3.3}
0\leq u_f(x)\leq \bar u(x)\leq \|\bar u\|_{\infty}<\infty,\quad\text{for all}\quad f\in\mathcal{C^{+}}, x\in D.
\end{equation}
We adopt the direct method in the calculus of variations, i.e., choose a maximizing sequence $f_k\in\mathcal{C^{+}}$ for the problem
(\ref{2.2}), such that
\begin{equation}
\label{3.4}
\Phi(f_k)>\sup_{f\in\mathcal{C^{+}}}\Phi(f)-\frac{1}{k}=m-\frac{1}{k},
\end{equation}
where we clearly have due to (\ref{3.3}) the bound
$$m=\sup_{f\in\mathcal{C^{+}}}\Phi(f)\leq M=\|\bar u\|_{\infty}<\infty.$$
As the sequence $\{f_k\}$ is bounded in $L^\infty(D),$ then by weak$^{\ast}$ compactness, there exists a function $f_0\in L^\infty(D),$ such that
\begin{equation}
\label{3.5}
f_k\rightharpoonup f_0 \quad\text{in}\quad L^\infty(D),
\end{equation}
for a subsequence (not relabeled). By the convergence of the averages
$$\int_E f_k\to \int_E f_0,$$
for any measurable subset $E\subset D,$ we have that $0\leq \int_E f_0\leq |E|,$ thus we get $0\leq f_0(x)\leq 1$ a.e. $x\in D.$ Also
it follows that $\int_E f_0=\beta,$ thus $f_0\in \mathcal{C^+}.$ We aim to prove that the unique $u_0\in H_0^1(D)$ solution of $-\Delta u_0=f_0$ in $D$ satisfies the condition $\sup_{x\in D}|u_0(x)|=m.$ Let $u_k\in H_0^1(D)$ be the unique weak solution of the equation $-\Delta u_k=f_k.$ Then by the estimate $\|u_k\|_{H^1}\leq C\|f\|_{2},$ and weak $L^2$ compactness, there exists a function $u\in H_0^1(D)$ such that for a subsequence (not relabeled) we have
\begin{equation}
\label{3.6}
u_k\to u\quad\text{in} \quad L^2(D),\quad\text{and}\quad \nabla u_k\rightharpoonup \nabla u\quad\text{weakly in} \quad L^2(D).
\end{equation}
For any $\varphi\in C^1(\bar D)$ we have
$$\int_D f_0\varphi=\lim_{k\to\infty}\int_D f_k\varphi=\lim_{k\to\infty}\int_D \nabla u_k\nabla\varphi=\int_D \nabla u\nabla\varphi,$$
thus the function $u$ solves the equation $-\Delta u=f_0$ in $D,$ therefore we have $u=u_0.$
Note that as $\bar u\in C(\bar D)$ and $\bar u=0$ on $\partial D,$ then there exists $\delta>0$ such that
$|\bar u(x)|<\frac{m}{2}$ if $x\in D$ with $\mathrm{dist}(x,\partial D)\leq \delta.$ Taking into account (\ref{3.2}), we obtain the estimate
 \begin{equation}
\label{3.7}
0< u_k(x) <\frac{m}{2}\quad\text{if}\quad x\in D,\ \ \mathrm{dist}(x,\partial D)\leq \delta.
\end{equation}
Recall next the following classical local H\"older regularity result [\ref{bib:Gil.Tru.}, Theorem~8.22], which we formulate below for the convenience of the reader (the below formulation is the simplified version for the Laplace operator of the original version of Theorem~8.22 in [\ref{bib:Gil.Tru.}]).
\begin{thm}
\label{th:3.1}
Assume $g\in L^{\frac{q}{2}}(D)$ for some $q>n.$ Then if $v\in H^1(D)$ solves $-\Delta v=g$ in $D,$ it follows that $v$ is locally H\"older continuous in $\Omega,$ and for any $y\in D,$ any $0<R\leq R_0$ such that $B_{R_0}\subset D,$ one has the estimate
\begin{equation}
\label{3.8}
\underset{B_R(y)}{\mathrm{osc}} v\leq CR^\gamma (R_0^{-\gamma}\underset{B_{R_0}(y)}{\mathrm{sup}}|v|+\|g\|_{L^q(B_{R_0})}),
\end{equation}
where the constants $C,\gamma>0$ depend only on $n,q$ and $R_0$ and
$\underset{E}{\mathrm{osc}}\ v=\underset{E}{\mathrm{sup}}\ v-\underset{E}{\mathrm{inf}}\ v$ for any $E\subset D.$
\end{thm}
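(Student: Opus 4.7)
The plan is to establish (\ref{3.8}) by the classical Campanato-style decomposition, coupled with a preliminary local-boundedness estimate. One first uses Moser iteration on $-\Delta v = g$ with $g\in L^{q/2}(D)$ (so $q/2>n/2$) to obtain $v\in L^\infty_{\mathrm{loc}}(D)$, so that $\sup_{B_{R_0}(y)}|v|$ on the right-hand side is meaningful.

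Fix $B_{R_0}(y)\subset D$; for every $0<R\leq R_0$ decompose $v = h + w$ on $B_R(y)$, where $h$ is the harmonic extension of $v|_{\partial B_R(y)}$ into $B_R(y)$ and $w\in H_0^1(B_R(y))$ solves $-\Delta w = g$ with zero Dirichlet data. The aim is to show that the oscillation of $v$ on smaller concentric balls is governed by the linearly decaying oscillation of $h$ plus the sup-norm of $w$, the latter being small for small $R$.

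For the inhomogeneous part $w$, represent it through the Dirichlet Green's function $G_R$ of the ball, and dominate $|G_R(x,\cdot)|$ by the fundamental solution $|x-\cdot|^{2-n}$ (with the logarithmic analogue when $n=2$). H\"older's inequality with dual exponent $(q/2)'=q/(q-2)$ yields
\[
\|w\|_{L^\infty(B_R(y))} \leq C(n,q)\, R^{\gamma}\, \|g\|_{L^{q/2}(B_R(y))}, \qquad \gamma:=2-\tfrac{2n}{q}>0,
\]
the integrability condition $(n-2)q/(q-2)<n$ needed for $|\cdot|^{2-n}\in L^{(q/2)'}(B_R)$ being exactly $q>n$. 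For the harmonic part, interior gradient estimates give $\mathrm{osc}_{B_\rho(y)} h \leq C(\rho/R)\,\mathrm{osc}_{B_R(y)} h$ for $\rho\leq R/2$; combining with $\mathrm{osc}\,h\leq \mathrm{osc}\,v + 2\|w\|_{L^\infty(B_R)}$ produces
\[
\underset{B_\rho(y)}{\mathrm{osc}}\, v \;\leq\; C\,\tfrac{\rho}{R}\,\underset{B_R(y)}{\mathrm{osc}}\, v \;+\; C\,R^\gamma\,\|g\|_{L^{q/2}(B_{R_0}(y))},
\]
valid for all $0<\rho\leq R/2\leq R_0/2$.

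To close, I would apply the standard Campanato iteration lemma: if $\varphi(\rho)\leq A(\rho/R)\varphi(R)+BR^\gamma$ holds for $\rho\leq R/2$ with $\gamma<1$, then $\varphi(\rho)\leq C'\rho^\gamma(R_0^{-\gamma}\varphi(R_0)+B)$. Applied to $\varphi(R)=\mathrm{osc}_{B_R(y)} v\leq 2\sup_{B_{R_0}(y)}|v|$ with $B=C\|g\|_{L^{q/2}(B_{R_0})}$, it yields exactly the form (\ref{3.8}) (the $L^q$ version arising by one further H\"older step on the bounded set $B_{R_0}$ to pass from $L^{q/2}$ to $L^q$). The main technical nuisance will be the two-dimensional case, where $|x-z|^{2-n}$ must be replaced by $\log|x-z|$; that kernel lies in $L^{(q/2)'}(B_R)$ for every $q>2$, only with a logarithmic correction that still absorbs into the same exponent $\gamma$. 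Everything else is bookkeeping with dimensional constants.
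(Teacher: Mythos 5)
The paper does not actually prove Theorem~\ref{th:3.1}: it is quoted as a specialization (to the Laplacian) of Theorem~8.22 in Gilbarg--Trudinger, and is used as a black box. Your proposal therefore supplies an argument where the paper gives none, and the route you take --- harmonic replacement on concentric balls, Green's-function/H\"older bound for the inhomogeneous part, and the Campanato iteration lemma --- is the standard interior-regularity scheme and is essentially sound. Two small points deserve care. First, the exponent $\gamma = 2 - 2n/q$ you extract from the kernel estimate satisfies $\gamma \geq 1$ as soon as $q \geq 2n$; since the harmonic part decays only linearly, the iteration lemma cannot deliver an oscillation decay of order better than $R^{\alpha}$ with $\alpha<1$, so in that regime you must replace $\gamma$ by $\min\{\gamma,\,1-\varepsilon\}$ for some fixed $\varepsilon>0$. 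This is harmless because the theorem only asserts the existence of \emph{some} $\gamma>0$ depending on $n,q,R_0$. Second, your final H\"older step to match the $\|g\|_{L^q}$ on the right-hand side of (\ref{3.8}) uses $\|g\|_{L^{q/2}(B_{R_0})}\le |B_{R_0}|^{1/q}\|g\|_{L^q(B_{R_0})}$, which tacitly assumes $g\in L^q$ rather than the stated $g\in L^{q/2}$; the paper's own statement is internally inconsistent on this point (hypothesis in $L^{q/2}$, conclusion with the $L^q$ norm), and your reading --- that the estimate is simply vacuous when $g\notin L^q$ --- is the sensible one, consistent also with the only application in the paper, where $g=f_k-f_0\in L^\infty$ and $q=2n$.
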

Owing to (\ref{3.4}), we choose a point $x_k\in D$ with the property that $|u_k(x_k)|>m-\frac{1}{k}.$ Then (\ref{3.7}) implies that for big enough $k$ one has $x_k\in D_\delta,$ where $D_\delta=\{x\in D \ : \ \mathrm{dist}(x,\partial D)\geq \delta\}.$ We apply Theorem~3.1 to the function
$v=u_k-u_0$ (where we take $q=2n$) in the ball $B_{\delta}(x_k)\subset D,$ to get
\begin{equation}
\label{3.9}
\underset{B_R(x_k)}{\mathrm{osc}} (u_k-u_0)\leq C_\delta R^\gamma (\delta^{-\gamma}\underset{B_{\delta}(x_k)}{\mathrm{sup}}|u_k-u_0|+\|f_k-f_0\|_{L^{2n}(B_{\delta})}),
\end{equation}
for any $0<R\leq\delta.$ We have that $\|f_k-f_0\|_{L^{2n}(B_{\delta})}\leq \|f_k-f_0\|_{L^{2n}(D)}\leq 2|D|^{\frac{1}{2n}}$ and also by (\ref{3.3}) that $\underset{B_{\delta}(x_k)}{\mathrm{sup}}|u_k-u_0|\leq 2M,$ thus we get from (\ref{3.9}) the bound
\begin{equation}
\label{3.10}
\underset{B_R(x_k)}{\mathrm{osc}} (u_k-u_0)\leq C R^\gamma,
\end{equation}
for some constant $C$ depending only on $n,\delta,|D|$ and $M.$ Denote next $\epsilon_k=|u_k(x_k)-u_0(x_k)|.$ The goal is now to prove that $\epsilon_k\to 0$ as $k\to\infty.$ Assume by contradiction that for some subsequence (not relabeled) one has $\epsilon_k\geq \epsilon>0.$ Then by (\ref{3.10}) there exists a constant radius $0<r<\delta$ such that $|u_k(x)-u_0(x)|\geq \frac{\epsilon}{2}$ for $x\in B_r(x_k),$ which implies the bound
$$\|u_k-u_0\|_{L^2(D)}^2\geq \int_{B_r(x_k)}|u_k(x)-u_0(x)|^2dx\geq c_nr^n\epsilon^2,$$
where $c_n>0$ is a constant depending only on $n.$ The last lower bound contradicts the $L^2$ convergence $u_k\to u_0$ in (\ref{3.6}). The convergence $\epsilon_k\to 0$ and the inequality $|u_k(x_k)|>m-\frac{1}{k}$ give
$$\sup_{x\in D}u_0(x)\geq \liminf_{k\to\infty}u_0(x_k)\geq m,$$
which completes the proof of the existence part.\\
\textbf{Proof of (\ref{2.3})-(\ref{2.5}).} We will utilize the following well-known argument called the bathtub principle [\ref{bib:Lie.Los.}, Theorem 1.14], which is formulated below.
\begin{thm}
\label{th:3.2}
Assume $D\subset\mathbb R^n$ is open bounded connected and Lipschitz and assume $0\leq\beta\leq |D|.$ Let $G\in L^1(D)$ be a non-negative function without flat parts, i.e.,
$$
|\{x\, : \, G(x)=l   \}|=0\quad \text{ for all }\quad l>0.
$$
Then the maximization problem
$$
I=\sup_{g\in \mathcal{C^{+}}} \int_D G(x)g(x)dx
$$
is solved by
$$
\hat{g}(x)=\chi_{\{x\, :\, G(x)>\alpha\}},
$$
where $\alpha$ is chosen such that
$$
|{\{x\, :\, G(x)>\alpha\}}|=\beta.
$$
\end{thm}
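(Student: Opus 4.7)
The strategy is the classical rearrangement argument: to maximize $\int_D G g\,dx$ over $g\in\mathcal{C^{+}}$, one wants to concentrate the available mass of $g$ where $G$ is largest, saturating the pointwise bound $g\leq 1$ from the top of the graph of $G$ downwards until the prescribed $L^1$ mass $\beta$ is exhausted. The indicator $\hat{g}=\chi_{\{G>\alpha\}}$ is the unique configuration of this bang-bang form, and the whole proof reduces to (i) calibrating $\alpha$ and (ii) a one-line level-set comparison.

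First I would produce $\alpha$ by analyzing the distribution function $\mu(t)=|\{G>t\}|$. This $\mu$ is non-increasing on $[0,\infty)$, right-continuous in general, and satisfies $\mu(t)\to 0$ as $t\to\infty$ because $G\in L^1(D)$. The no-flat-parts hypothesis is precisely what upgrades right-continuity to genuine continuity on $(0,\infty)$, since any jump of $\mu$ at height $l>0$ would require $|\{G=l\}|>0$. Assuming the natural compatibility $\beta\leq|\{G>0\}|$, the intermediate value theorem then delivers $\alpha\geq 0$ with $\mu(\alpha)=\beta$, and by construction $\hat{g}=\chi_{\{G>\alpha\}}\in\mathcal{C^{+}}$.

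For optimality, I would decompose $D$ (up to the null set $\{G=\alpha\}$) into $\{G>\alpha\}\cup\{G<\alpha\}$ and, for an arbitrary competitor $g\in\mathcal{C^{+}}$, write
\[
\int_D G(\hat{g}-g)\,dx=\int_{\{G>\alpha\}}G(1-g)\,dx-\int_{\{G<\alpha\}}Gg\,dx.
\]
On $\{G>\alpha\}$ the factor $1-g\geq 0$ gives $G(1-g)\geq\alpha(1-g)$, and on $\{G<\alpha\}$ the sign $g\geq 0$ gives $-Gg\geq-\alpha g$. Summing the two bounds and using $\int_D(\hat{g}-g)\,dx=\beta-\beta=0$ yields
\[
\int_D G(\hat{g}-g)\,dx\geq\alpha\int_D(\hat{g}-g)\,dx=0,
\]
which is exactly the claimed optimality $\int_D G\hat{g}\,dx\geq\int_D Gg\,dx$.

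The main subtlety, such as it is, concerns the existence of $\alpha$, and this is precisely where the no-flat-parts assumption does the work: it guarantees that $|\{G>\alpha\}|=\beta$ can be arranged exactly, so that no fractional occupation of the critical level $\{G=\alpha\}$ is needed, and it simultaneously delivers essential uniqueness of the maximizer. In the genuinely degenerate regime $\beta>|\{G>0\}|$ one would be forced to set $g=1$ throughout $\{G>0\}$ and then distribute the remaining mass arbitrarily inside $\{G=0\}$; the clean statement of Theorem~\ref{th:3.2} implicitly excludes this case, so the proof plan outlined above is complete under its hypotheses.
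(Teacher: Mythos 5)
The paper does not prove Theorem~\ref{th:3.2} at all: it is quoted as the ``bathtub principle'' from Lieb--Loss, Theorem~1.14, and used as a black box in the proof of Theorem~\ref{th:2.1}. Your argument supplies the standard direct proof of that cited result and it is correct. The existence of $\alpha$ via continuity of the distribution function $\mu(t)=|\{G>t\}|$ (with the no-flat-parts hypothesis upgrading right-continuity to continuity for $t>0$) and the optimality estimate
$$
\int_D G(\hat g-g)\,dx \;\geq\; \alpha\int_D(\hat g-g)\,dx \;=\;0
$$
obtained from $G\geq\alpha$ on $\{G>\alpha\}$, $G\leq\alpha$ on $\{G<\alpha\}$, and the mass constraint, is exactly the classical bathtub argument. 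One small inaccuracy: you refer to $\{G=\alpha\}$ as a null set, which the hypothesis guarantees only when $\alpha>0$; if $\alpha=0$ the set $\{G=0\}$ may have positive measure, but since $G$ vanishes there the term it contributes to $\int_D G(\hat g-g)\,dx$ is still zero, so the conclusion is unaffected. You also correctly flag the implicit compatibility condition $\beta\leq|\{G>0\}|$ needed for $\alpha$ to exist, which is indeed satisfied in the paper's application since $G$ there is a Green's function, strictly positive in $D$.
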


Denote now $f_0=\hat{f}$ and $u_0=\hat{u}.$ By the local H\"older continuity of the function $\hat{u}$ (Theorem~3.1) and the property that the maximal value is taken a positive distance away from the boundary (\ref{3.7}), we have that $\sup_D|\hat{u}(x)|$ is attained at a point $\hat{x}\in D_\delta.$ Using Poisson integral formula we have that
$$
\hat{u}(\hat{x})=\int_D G(\hat{x},y)\hat{f}(y)dy\geq \int_D G(\hat{x},y)f(y)dy=u_f(\hat{x})
$$
for any $f\in\mathcal{C^+}.$ Consequently, the bathtub principle give the result
\begin{equation}
\label{2.3555}
\hat{f}(x)=\chi_{\{y\,:\,G(\hat{x},y)>\alpha_{\hat{x}}\}}(x)\in \mathcal{C^{+}}.
\end{equation}
Let us now observe that the set $\{x\,\, : \,\,  \hat{u}(x)=\hat{u}_{max} \}$ consists of one element.
Assume in contradiction $\hat{x},\tilde{x}\in\{x\,\, : \,\,  \hat{u}(x)=\hat{u}_{max} \}$ for some $\hat{x}\neq\tilde{x}.$ Then by (\ref{2.3555}) we have
$$
\hat{f}(x)=\chi_{\{y\,:\,G(\hat{x},y)>\alpha_{\hat{x}}\}}(x)=\chi_{\{y\,:\,G(\tilde{x},y)>\alpha_{\tilde{x}}\}}(x),
$$
thus
$$
G(\hat{x},y)-\frac{\alpha_{\hat{x}}}{\alpha_{\tilde{x}}}G(\tilde{x},y)\equiv 0,
$$
for $y\in D\setminus \{y\,:\,G(\hat{x},y)>\alpha_{\hat{x}}\}$. This implies $\hat{x}=\tilde{x}$ and $\alpha_{\hat{x}}=\alpha_{\tilde{x}}$, which is a contradiction.

\textbf{Proof of (\ref{2.6}).}
The optimality condition
$$
0=\nabla u(\hat{x})=\int_D \nabla_{x} G(\hat{x}, y) \chi_{\{G(\hat{x}, y)>\alpha_{\hat{x}}\}}dy.
$$
follows from $\hat{x}\in \{x\,\, : \,\,  \hat{u}(x)=\hat{u}_{max} \}$.
The theorem is now proven.
\end{proof}


\begin{proof}[Proof of Theorem~\ref{th:2.2}]
Without loss of generality we can assume that $\|f\|_\infty=1$, the general case following from linearity. If $f\in \mathcal{C}^{+},$ then
the proof follows from Theorem~\ref{2.1} with
\begin{equation}
\label{sigmadef}
\sigma(\|f\|_1)=u(\hat{x})=\int_D  G(\hat{x}, y) \chi_{\{G(\hat{x}, y)>\alpha_{\hat{x}}\}}dy.
\end{equation}
In the case when $f\in\mathcal{C}$ is not necessarily nonnegative, since $f$ and $|f|$ have same the $L^1$ and $L^\infty$ norms and
$$
u_{-|f|}\leq u_f\leq u_{|f|}
$$
then Theorem \ref{th:2.2} obviously holds.
\end{proof}


\subsection{Estimates on $\sigma_D$ for nonnegative $f$}

\begin{proof}[Proof of Theorem~\ref{th:2.3}]
The main tool in the proof is the following result of Talenti [\ref{bib:Talenti}].
\begin{lem}[Talenti's Lemma]
\label{talenti}
Let $f$ be a smooth nonnegative function in $D\subset \mathbb{R}^n$, and let $u$ solve the Dirichlet boundary value problem
\begin{equation}
\begin{cases} -\Delta u(x) = f (x)  & \mbox{in } D,
 \\
 u=0  & \mbox{on } \partial D. \end{cases}
\end{equation}
Assume $f^*$ and $u^*$ are the symmetric decreasing rearrangements of respectively $f$ and $u$ defined in a ball $B$, such that $|B|=|D|$. Then if $v$ solves the problem
\begin{equation}
\label{main}
\begin{cases} -\Delta v(x) = f^* (x)  & \mbox{in } B,
 \\
 v=0  & \mbox{on } \partial B,
 \end{cases}
\end{equation}
then
$$
u^*(x)\leq v(x),\quad{for all} x\in B.
$$
Moreover, the equality holds if and only if $D=B$ and $f=f^*$.
\end{lem}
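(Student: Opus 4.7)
My plan is to execute Talenti's classical symmetrization argument via the distribution function $\mu(t) = |\{x \in D : u(x) > t\}|$. Under the standing smoothness of $f$, elliptic regularity gives $u$ smooth, so by Sard's theorem $\{u = t\}$ is a smooth hypersurface with $|\nabla u| > 0$ for almost every level $t \in (0, \sup u)$. On such a level two identities are available: the divergence theorem applied to $\{u > t\}$ together with $-\Delta u = f$ gives
\[
\int_{\{u = t\}} |\nabla u|\, d\mathcal{H}^{n-1} = \int_{\{u > t\}} f\, dx,
\]
while the co-area formula gives
\[
-\mu'(t) = \int_{\{u = t\}} |\nabla u|^{-1}\, d\mathcal{H}^{n-1}.
\]

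I would then combine three sharp inequalities. Cauchy--Schwarz on $\{u = t\}$ applied to the factorization $1 = |\nabla u|^{1/2} \cdot |\nabla u|^{-1/2}$ bounds $\mathcal{H}^{n-1}(\{u = t\})^2$ by the product of the two integrals above. The Euclidean isoperimetric inequality gives $\mathcal{H}^{n-1}(\{u = t\}) \geq n\omega_n^{1/n}\, \mu(t)^{(n-1)/n}$, and the Hardy--Littlewood rearrangement inequality gives $\int_{\{u > t\}} f\, dx \leq \int_0^{\mu(t)} f^{\sharp}(s)\, ds$, where $f^{\sharp}(s) = f^{*}((s/\omega_n)^{1/n})$ is the rearrangement of $f$ regarded as a function of volume. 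Combining produces the differential inequality
\[
n^2 \omega_n^{2/n}\, \mu(t)^{2(n-1)/n} \leq (-\mu'(t)) \int_0^{\mu(t)} f^{\sharp}(s)\, ds.
\]

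For the comparison $v$ solving $-\Delta v = f^*$ on $B$, radial monotonicity forces each of the three ingredients above to be an equality (level sets are concentric spheres, $|\nabla v|$ is constant on them, and $f^*$ is its own rearrangement). The analogous relation for $\nu(t) = |\{v > t\}|$ is thus an identity that integrates to recover $v$ explicitly in terms of $f^{\sharp}$. I would then integrate the differential inequality for $\mu$ and compare, via the change of variable $s = \omega_n \rho^n$, with the identity for $\nu$ at matching volume level, obtaining the pointwise bound $u^*(x) \leq v(x)$ on $B$. The rigidity statement follows by tracing the equality cases: saturation of the isoperimetric inequality forces each super-level set of $u$ to be a ball, Cauchy--Schwarz saturation forces $|\nabla u|$ constant on each level set, and Hardy--Littlewood saturation forces $f = f^*$; together these imply $D = B$ and $f = f^*$.

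The main obstacle is not any individual inequality but the glue between them: one must handle regular versus critical values of $u$ carefully (Sard plus an approximation argument for rough $f$), and then convert the differential inequality in the $t$-variable into a pointwise comparison in the $x$-variable, which requires inverting the relation between $t$ and $\mu(t)$ and tracking the boundary condition $\mu(0) = |\{u > 0\}| \leq |D| = |B|$ with care.
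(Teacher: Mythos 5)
The paper does not prove this lemma at all: it is cited directly as a result of Talenti [\ref{bib:Talenti}] and used as a black box in the proof of Theorem~\ref{th:2.3}. Your sketch reconstructs Talenti's original symmetrization argument, and the outline is correct: divergence theorem plus co-area to produce the two integrals on $\{u=t\}$, then Cauchy--Schwarz, the isoperimetric inequality, and Hardy--Littlewood to arrive at the differential inequality for the distribution function $\mu$, and finally comparison with the corresponding identity for $v$ (where radial symmetry saturates every step). A few places need more care than the closing paragraph suggests. The co-area relation should be written $-\mu'(t)\geq\int_{\{u=t\}}|\nabla u|^{-1}\,d\mathcal{H}^{n-1}$ (strict inequality is possible where $|\nabla u|$ vanishes on a set of positive measure in a level band); fortunately this points in the direction the chain needs, so it does not break the argument. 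When converting the $t$-differential inequality into a pointwise bound at equal volume, you need $\mu(0^+)=|D|$, which follows from $f\geq0$ and the strong maximum principle, and you must also deal with possible plateaus of $u$ where $\mu$ jumps. The genuinely delicate part is the rigidity claim "equality iff $D=B$ and $f=f^*$": isoperimetric saturation only gives that each super-level set of $u$ is a ball, not that the balls are concentric, and Hardy--Littlewood saturation gives that each super-level set of $u$ coincides with a super-level set of $f$, not directly $f=f^*$. That characterization of equality goes beyond Talenti's 1976 paper (it is due to later work, e.g.\ Kesavan and Alvino--Lions--Trombetti), so if you intend to present a self-contained proof of the lemma as stated, the equality case requires a substantially more careful argument than the one-sentence chain of saturations you give.
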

We get the following simple corollary of Telenti's lemma.
\begin{lem}
\label{talenti.green}
Let $G^*_{\hat{x},D}(y)$ be the symmetric decreasing rearrangement of $G_D(\hat{x},y)$
with respect to the $y$ variable. Then
$$
G^*_{\hat{x},D}(y)\leq G_B(0,y)\quad \text{ for }\quad y\in B\setminus \{0\},
$$
where $G_B$ is the Green's function in the ball $B$ with $|B|=|D|$.
\end{lem}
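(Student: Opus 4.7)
The plan is to deduce the lemma from Talenti's Lemma by approximating the Dirac mass at $\hat{x}$ (which is the formal right-hand side that produces the Green's function $G_D(\hat x,\cdot)$) by smooth nonnegative data, applying the rearrangement comparison to the regularized problems, and then passing to the limit.

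First, I would fix a sequence of smooth nonnegative functions $f_\varepsilon$, each supported in the ball $B_\varepsilon(\hat x)\subset D$ and normalized by $\int_D f_\varepsilon\,dz = 1$, and let $u_\varepsilon\in H_0^1(D)$ solve $-\Delta u_\varepsilon = f_\varepsilon$ in $D$ with zero boundary data. The Poisson representation
\[
u_\varepsilon(y)=\int_D G_D(z,y)\, f_\varepsilon(z)\, dz
\]
together with the continuity of $G_D$ off the diagonal and its $L^p(D)$ integrability for every $p<\tfrac{n}{n-2}$ (for $n\ge 3$; for every $p<\infty$ when $n=2$) yield $u_\varepsilon(y)\to G_D(\hat x,y)$ pointwise for $y\neq\hat x$ and strongly in $L^p(D)$ for the same range of $p$.

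Next I would apply Talenti's Lemma to each regularized problem, obtaining $u_\varepsilon^{\,*}\le v_\varepsilon$ on $B$, where $v_\varepsilon\in H_0^1(B)$ solves $-\Delta v_\varepsilon = f_\varepsilon^{\,*}$. Since $f_\varepsilon^{\,*}$ is itself a smooth nonnegative approximation of the Dirac mass, now centered at the origin of $B$ and of total mass $1$, the same Poisson representation argument shows $v_\varepsilon(y)\to G_B(0,y)$ pointwise on $B\setminus\{0\}$ and uniformly on compact subsets of $B\setminus\{0\}$. The $L^p$ convergence of $u_\varepsilon$ to $G_D(\hat x,\cdot)$ transfers to $L^p$ convergence of the rearrangements $u_\varepsilon^{\,*}$ to $G^*_{\hat x,D}$ because symmetric decreasing rearrangement is $L^p$-nonexpansive; extracting a further subsequence gives pointwise a.e.\ convergence. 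Combining this with the pointwise limit of $v_\varepsilon$, the inequality $u_\varepsilon^{\,*}\le v_\varepsilon$ passes to the limit as $G^*_{\hat x,D}(y)\le G_B(0,y)$ a.e.\ on $B$, and a radiality-plus-continuity argument on $B\setminus\{0\}$ (both sides being radial, and the left side being continuous away from $0$ because $G_D(\hat x,\cdot)$ is continuous away from $\hat x$) promotes this to the pointwise inequality stated.

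The main obstacle is precisely that Talenti's Lemma as formulated requires smooth nonnegative right-hand sides, whereas the Green's function corresponds to a Dirac mass at $\hat x$. All the care above is in choosing an approximation compatible simultaneously with the Dirichlet problem, with symmetric decreasing rearrangement, and with the singular limit $\varepsilon\to 0$; the mollified Dirac sequence handles all three without fuss, and the remaining work is just the standard bookkeeping of off-diagonal Green's function regularity and non-expansiveness of the rearrangement.
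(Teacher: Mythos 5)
Your proposal is correct and takes essentially the same approach as the paper: the paper itself just sketches this exact strategy --- observing that $G_D(\hat x,\cdot)$ and $G_B(0,\cdot)$ solve the Dirichlet problems with Dirac data, invoking Talenti's Lemma, and remarking that the extension to a Dirac mass ``trivially follows from approximation $\frac{1}{\omega_n r^n}\chi_{B_r(y)}\rightharpoonup\delta_y$'' while omitting the details. You have simply supplied those omitted details (Poisson representation, $L^p$-nonexpansiveness of rearrangement, passage to the limit), with the only cosmetic difference being that you mollify with smooth bumps rather than normalized indicator functions.
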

\begin{proof}
Observe that
\begin{equation}
\begin{cases}
-\Delta_x G_{ D}(\hat{x},x) = \delta_{\hat{x}} (x)  & \mbox{in } D,
 \\
 G_{ D}(\hat{x},x)=0  & \mbox{for } x\in\partial D,
\end{cases}
\end{equation}
and
\begin{equation}
\label{main}
\begin{cases} -\Delta G_B(0,x) = \delta_0 (x)  & \mbox{in } B,
 \\
 G(0,x)=0  & \mbox{on } \partial B,
 \end{cases}
\end{equation}
where $\delta_y(x)$ is the Dirac measure in the point $y$. The proof would follow from Lemma \ref{talenti} if we could take $f$ a Dirac measure. This is a simple generalization which trivially follows from approximation $\frac{1}{\omega_nr^n}\chi_{B_r(y)}(x)\rightharpoonup \delta_y(x)$, we omit the details here.
\end{proof}
Next we obtain from (\ref{sigmadef}),
\begin{align*}
\sigma_D(\|f\|_1)&=u(\hat{x})\\
&=\int_D  G_D(\hat{x}, y) \chi_{\{G(\hat{x}, y)>\alpha_{\hat{x}}\}}dy\\
&=\int_B  G^*_{\hat{x},D}(y) \chi_{\{G^*_{\hat{x},D}( y)>\alpha_{\hat{x}}\}}dy\\
&\leq \int_B  G_0(0,y) \chi_{\{G_B(0, y)>\alpha_{0}\}}dy,
\end{align*}
where the last inequality follows from Lemma \ref{talenti.green} and the fact that
$${\{G^*_{\hat{x},D}( y)>\alpha_{\hat{x}}\}}= {\{G_B(0, y)>\alpha_{0}\}}.$$
To calculate the value of $\sigma_B$ we utilize the example (\ref{1.8}) in the ball $B_R:$
\begin{equation}
u_r(|x|)=
\begin{cases}
F(R)-F(|x|)  & \mbox{in } B_R\setminus B_r,\\
F(R)-F(r)+\frac{1}{2r^{n-2}}-\frac{1}{2r^n} |x|^2 & \mbox{in }  B_r,
\end{cases}
\end{equation}
where $F(|x|)$ is the fundamental solution multiplied by a dimensional constant such that $F'(r)= \frac{1}{r^{n-1}}$.
Computing the norms $\|f_r\|_{L^1(B)}$ and $\|f_r \|_{L^\infty(B)}$ of $f_r=-\Delta u_r$ we get
$$
\sigma_{B}(\omega_n r^n)=\frac{1}{n}r^n(F(R)-F(r))+\frac{1}{2}r^2,
$$
thus we obtain for $\sigma_B$ the formula
$$
\sigma_{B}(t)=\frac{t}{n\omega_n}(F(R)-F((t/\omega_n)^{1/n}))+\frac{1}{2}(t/\omega_n)^{2/n}.
$$
In particular for $n\geq 3$ we have $F(\tau)=-\frac{1}{(n-2)\tau^{n-2}}$ and
\begin{align*}
\sigma_{B}(t)&=\frac{t}{n(n-2)\omega_n}\left(\frac{1}{(t/\omega_n)^{(n-2)/n}}-\frac{1}{R^{n-2}}\right)+\frac{1}{2}(t/\omega_n)^{2/n}\\
&=C_1(n) t^{\frac{2}{n}}-C_2(n)\frac{t}{R^{n-2}},
\end{align*}
with
\begin{equation}
\label{constants}
C_1(n)=\frac{(n-1)^2+1}{2n(n-2)\omega_n^{2/n}}\quad
\text{ and }\quad
C_2(n)=\frac{1}{n(n-2)\omega_n}.
\end{equation}
For $n=2$ we need to take  $F(\tau)=\ln \tau$, $\omega_2=\pi,$ which yields
\begin{align*}
\sigma_{B}(t)&=\frac{t}{2\pi}\left(\ln R -\ln((t/\pi)^{1/2})\right)+\frac{1}{2}(t/\pi)\\
&=\left(\frac{1}{2}\ln \pi +\frac{1}{2\pi} +\frac{1}{2\pi}\ln R\right)t-\frac{1}{4\pi} t\ln t.
\end{align*}
The proof is finished now.

\end{proof}


\subsection{Signchanging Laplacian: The first approach.}

In the next two subsections we prove Theorem~\ref{th:2.4}. Note that (\ref{2.12}) is a finer estimate, which takes into account the cancelation phenomena. The first approach is to add the norm $\|f\|_\infty$ to the right hand side of the equation $-\Delta u=f,$ while the second one it to consider the positive and negative parts of $f,$ i.e., the functions $f^+$ and $f^-$.
From the linearity we know that
$$
u_f=u_g-\|f\|_{\infty} v
$$
where $g= f+\|f\|_{\infty}$ and $v=u_1$ is the solution of the Poisson problem with $f\equiv 1$. Observe that $g\geq 0$,  $\|g\|_{\infty}\leq 2\|f\|_{\infty}$
and $\|g\|_{L^1(D)}=I_f+\|f\|_{\infty}|D|$, where $I_f=\int_D fdx$.
Thus
$$
\|u_g\|_{\infty}\leq 2\|f\|_\infty \sigma_D\left( \frac{I_f+\|f\|_\infty|D|}{2\|f\|_\infty} \right),
$$
and
$$
\max_{x\in D} \,\,u(x)\leq \|f\|_\infty \left[ \sigma_D\left( \frac{1}{2}\left[\tfrac{1}{\|f\|_\infty}I_f + |D| \right]\right)-  v(\breve{x})\right],
$$
where $\breve{x}$ is the point where the maximum of $u$ is achieved.
Similarly we can consider the function $g=\|f\|_\infty -f$ and obtain
$$
\max_{x\in D} \,\,(-u(x))\leq \|f\|_\infty \left[ \sigma_D\left( \frac{1}{2}\left[-\tfrac{1}{\|f\|_\infty}I_f + |D| \right]\right)-  v(\tilde{x})\right],
$$
where $\tilde{x}$ is the point where the minimum of $u$ is achieved.
The last two estimates yield the bound (\ref{signchanging}).
\begin{rem}
The inequality (\ref{signchanging}) is interesting because of two reasons. First for a sign changing function $f$ it contains the integral $I_f$ of $f$ and not the norm $\|f\|_1.$ Second, the function $v$ is a known positive function depending only on the domain. If the location of the point where the maximum of $|u|$ is achieved can be estimated, then the term $-v(\hat{x})$ would provide significant improvement of the inequality.
\end{rem}

\subsection{The second approach: Proof of Theorem~\ref{th:2.4}}
Now we can write $f=f^++f^-$, and thus by linearity
$$
u_f=u_{f^+}+u_{f^-}.
$$
Since the functions on the right hand side have different signs, we get
$$
|u|\leq\max (u_{f^+},u_{f^-})
$$
and thus
\begin{equation}
\label{signchanging2}
\|u\|_\infty \leq\max \left\{ \|f^+\|_\infty \sigma_D(\|f^+\|_1/\|f^+\|_\infty)\,\, ,\,\, \|f^-\|_\infty \sigma_D(\|f^-\|_1/\|f^-\|_\infty)    \right\}.
\end{equation}
\begin{rem}
The inequality (\ref{signchanging2}) is optimal on non-connected domains. We only need to let the function $f$ have opposite signs on those domains. By connecting the disconnected domains by narrow tubes we will not change much the inequality.
\end{rem}
The estimates (\ref{2.13}) follow from Theorems~\ref{th:2.2}-\ref{th:2.3}.

\section{An application to the Laplace eigenfunctions}

In this section we derive an estimate on the $L^\infty$ norm of the $k-$th eigenfunction of the Laplace operator in the domain $D.$ Assume as usual that $(\lambda_k,u_k)$ is the $k-$th eigenvalue-eigenfunction pair of the Laplace operator, i.e.,
\begin{equation}
\label{4.1}
\begin{cases}
-\Delta u_k=\lambda_k u_k & \mbox{ in } \ \ D,\\
u_k=0 & \mbox{ on } \ \ \partial D.\\
\end{cases}
\end{equation}
We apply (\ref{2.10}) to the pair $(u_k,\lambda_ku_k)$ to derive an estimate on the norm $\|u_k\|_\infty$ in terms of the parameters $n,$ $\lambda_k$ and $|D|.$ Assume first $n>2,$ then we have by (\ref{2.12}) that
\begin{equation}
\label{4.2}
\frac{1}{\lambda_k}\|u_k\|_{\infty}\leq \frac{(n-1)^2+1}{2n(n-2)\omega_n^{2/n}}\|u_k\|_{1}^{\frac{2}{n}}\|u_k\|_{\infty}^{\frac{n-2}{n}}-
\frac{1}{n(n-2)\omega_nR^{n-2}}\|u_k\|_{1}.
\end{equation}
Let $\alpha,\beta>0$ be parameters yet to be chosen such that $\alpha^2\beta^{n-2}=1.$ Then we have by Young's inequality
$$
\|u_k\|_{1}^{\frac{2}{n}}\|u_k\|_{\infty}^{\frac{n-2}{n}}=(\alpha\|u_k\|_{1})^{\frac{2}{n}}(\beta\|u_k\|_{\infty})^{\frac{n-2}{n}}
\leq \frac{2\alpha}{n}\|u_k\|_{1}+\frac{\beta(n-2)}{n}\|u_k\|_{\infty},
$$
thus we get the estimate
\begin{equation}
\label{4.3}
\frac{(n-1)^2+1}{2n(n-2)\omega_n^{2/n}}\|u_k\|_{1}^{\frac{2}{n}}\|u_k\|_{\infty}^{\frac{n-2}{n}}
\leq \frac{\alpha((n-1)^2+1)}{n^2(n-2)\omega_n^{2/n}}\|u_k\|_{1}
+\frac{\beta((n-1)^2+1)}{2n^2\omega_n^{2/n}}\|u_k\|_{\infty}.
\end{equation}
We now choose $\beta$ such that $\frac{\beta((n-1)^2+1)}{2n^2\omega_n^{2/n}}=\frac{1}{2\lambda_k},$ which gives
\begin{equation}
\label{4.4}
\beta=\frac{n^2\omega_n^{2/n}}{\lambda_k((n-1)^2+1)},\quad \alpha=\frac{[\lambda_k((n-1)^2+1)]^{\frac{n-2}{2}}}{n^{n-2}\omega_n^{\frac{n-2}{n}}}.
\end{equation}
A combination of (\ref{4.2}), (\ref{4.3}) and (\ref{4.4}) gives (\ref{2.14}) for $n>2.$ For the proof for $n=2$ observe that if
$\|u_k\|_\infty\leq \|u_k\|_1,$ then (\ref{2.14}) follows from (\ref{2.12}). If else $\|u_k\|_\infty>\|u_k\|_1,$ then
one should utilize the inequality $\left|\|u_k\|_1\ln(\frac{\|u_k\|_\infty}{\|u_k\|_1})\right|\leq \sqrt{2\|u_k\|_1\|u_k\|_\infty}$ and then a suitable Young's inequality. We omit the details here.

\section*{Acknowledgements.}
The research of the second author is partially supported by the National Science Foundation of China (research grant nr. 11650110437).


\end{document}